\DeclarePairedDelimiter{\abs}{\lvert}{\rvert}
\DeclareMathOperator{\Div}{Div}
\DeclareMathOperator{\Char}{char}
\DeclareMathOperator{\supp}{supp}
\def\vF{\mathbb{F}}
\def\vZ{\mathbb{Z}}
\def\vR{\mathbb{R}}
\def\vD{\mathbb{D}}
\def\vDs{\overline{\mathbb{D}}}
\def\vDi{\underline{\mathbb{D}}}
\def\cO{\mathcal{O}}
\def\cS{\mathcal{S}}
\def\cQ{\mathcal{Q}}
\def\cD{\mathcal{D}}
\def\cL{\mathcal{L}}
\def\cC{\mathcal{C}}
\newtheorem{theorem}{Theorem}
\newtheorem{corollary}[theorem]{Corollary}
\theoremstyle{definition}
\newtheorem{remark}[theorem]{Remark}
 \author[G. Micheli]{Giacomo Micheli}
 \address{Institute of Mathematics\\ 
 University of Zurich\\
 Winterthurerstrasse 190\\
 8057 Zurich, Switzerland\\
 }
 \email{giacomo.micheli@math.uzh.ch}
 \author[R. Schnyder]{Reto Schnyder}
 \address{Institute of Mathematics\\ 
 University of Zurich\\
 Winterthurerstrasse 190\\
 8057 Zurich, Switzerland\\
 }
 \email{reto.schnyder@math.uzh.ch}
\thanks{The authors were supported in part by Swiss National Science Foundation grant number 149716 and \emph{Armasuisse}}
\date{}
\title{On the density of coprime $m$-tuples\linebreak\ over holomorphy rings}
\begin{document}

\begin{abstract}
Let $\vF_q$ be a finite field, $F/\vF_q$ be a function field of genus $g$ having full constant field $\vF_q$, $\cS$ a set of places of $F$ and $H$ the holomorphy ring of $\cS$.
In this paper we compute the density of coprime $m$-tuples of elements of $H$.
As a side result, we obtain that whenever the complement of $\cS$ is finite, the computation of the density can be reduced to the computation of the $L$-polynomial of the function field.
In the rational function field case, classical results for the density of coprime $m$-tuples of polynomials are obtained as corollaries.
\end{abstract}

\maketitle

\smallskip
\noindent \textbf{Keywords}: Function fields, Density, Polynomials, Riemann-Roch spaces, Zeta function. \\
\smallskip
\noindent \textbf{MSC}: 11R58, 11M38, 11T06 

\section{Introduction}
\label{sec:introduction}

Let $\vF_q$ be a finite field with $q$ elements and let $F$ be an algebraic function field\footnote{In this note we will mostly use the language and notation of~\cite{bib:stichtenoth2009algebraic}.
}  having full constant field $\vF_q$. Let $\cC$ be the set of places of $F$  and $\cS \subsetneq \cC$ be a non-empty proper subset.
The holomorphy ring of $\cS$ is $H = \bigcap_{P \in \cS} \cO_P$, where $\cO_P$ is the valuation ring of the place $P$.

In what follows we will say that an $m$-tuple of elements of $H$ is coprime if its components generate the unit ideal in $H$ (in analogy to the case of the ring of integers in~\cite{bib:MicheliCesaro}).
In this paper we define a notion of density for subsets of $H^m$, using Moore-Smith convergence for nets~\cite[Chapter~2]{bib:kelley1955general}.
We then wish to study the density of the set of coprime $m$-tuples in $H$, considered as a subset of $H^m$.

The special case $F=\vF_q(x)$ and $H=\bigcap_{P\neq P_\infty} \cO_P=\vF_q[x]$ has been studied for $m=2$ in~\cite{bib:sugita2007probability} and more generally in~\cite{bib:guo2013probability}.
We will explain how to interpret the densities presented in these papers as particular cases of our general framework.
In fact, using the Riemann-Roch Theorem and the absolute convergence of the Zeta function of $F$, we are able to show that the density of coprime $m$-tuples of elements of a holomorphy ring exists and is equal to $1\over Z_H(q^{-m})$, where $Z_H$ is the Zeta function of the holomorphy ring, which will be defined in Section~\ref{sec:density}.

Finally, we provide an example  in the case of the affine ring of coordinates of an elliptic curve to show a concrete application of the main result.

The results in this paper provide a function field version of a classical result for the ring of integers, where the natural density of the set of coprime $m$-tuples of $\vZ$ is proven to be equal to $\frac{1}{\zeta(m)}$, $\zeta$ being the classical Riemann zeta function (see for example~\cite{bib:nymann1972probability}).
Similar results also hold in the rings of integers of arbitrary number fields (see~\cite{bib:MicheliCesaro} and~\cite{bib:BS}).

\subsection{Notation}
\label{sec:notation}

Let $F/\vF_q$ be an algebraic function field with full constant field $\vF_q$, 
let $g$ be the genus of $F$, and let $\cC$ be the set of its places.
Let $H$ be the holomorphy ring of a nonempty set of places $\cS\subsetneq \cC$.
For a fixed positive integer $m$, we wish to study the set of
coprime $m$-tuples of elements of the ring $H$. Let us denote this set by $U$:
\begin{equation*}
	U \coloneqq \{ f=(f_1,\dots,f_m) \in H^{m} \mid I_f = H \},
\end{equation*}
where $I_f$ denotes the ideal of $H$ generated by the set $\{f_1,\dots f_m\}$.

Define furthermore $\cD \coloneqq \{D \in \Div(F) \mid D \ge 0 \wedge \supp(D)
\subseteq \cC \setminus \cS\}$, the set of positive divisors supported away from $\cS$.
For any divisor $D$, the Riemann-Roch space associated to $D$ is defined as in \cite[Def. 1.4.4]{bib:stichtenoth2009algebraic} by
\[\cL(D):=\{f\in F \mid v_P(f)+v_P(D)\geq 0\;\forall P\in \cC\}\cup \{0\},\]
where $v_P$ is the valuation associated to the place $P$.
It follows that
\begin{equation*}
	H = \bigcup_{D \in \cD} \cL(D),
\end{equation*}
Recall that we have a bijection between  the set of places $\cS$ and the maximal ideals of $H$ given by $P\mapsto P\cap H\eqqcolon P_H$ (see for example~\cite[Proposition~3.2.9]{bib:stichtenoth2009algebraic}).
In analogy to the natural density of integers, we define the \emph{superior density} of a subset $L \subseteq H^{m}$
as
\begin{equation}\label{eq:supdens}
	\vDs(L) \coloneqq \limsup_{D \in \cD} 
	\frac{\abs{L \cap \cL(D)^{m}}}{\abs{\cL(D)^{m}}}.
\end{equation}
This limit can be defined via Moore-Smith convergence~\cite[Chapter~2]{bib:kelley1955general}. To be precise, the set of divisors $\cD$ with the usual partial order $\leq$ is a directed set, so the map from $\cD$ to the topological space $\vR$ defined as 
\[D\mapsto \frac{\abs{L \cap \cL(D)^{m}}}{\abs{\cL(D)^{m}}}\]
is a net.
Now, since $\vR$ is Hausdorff, the definition in \eqref{eq:supdens} is well posed.
Analogously one can define the \emph{inferior density} as
\begin{align*}
	\vDi(L) &\coloneqq \liminf_{D \in \cD} 
	\frac{\abs{L \cap \cL(D)^{m}}}{\abs{\cL(D)^{m}}}. 
\end{align*}
Moreover, whenever $\vDs(L) = \vDi(L)$, we call this value the \emph{density} of $L$ and denote it by $\vD(L)$. In the case in which $\cC\setminus \cS$ is finite, an analogous definition of density can also be found in \cite[Section 8]{bib:poonen}.

\section{The density of $U$}
\label{sec:density}

Recall that the Zeta function of the function field $F$ is given by
\[Z_F(T)\coloneqq\prod_{P\in \cC} \left(1-{T^{\deg(P)}}\right)^{-1}\]
for $0 < T < q^{-1}$. Analogously, we define the Zeta function of the holomorphy ring $H$ corresponding to the set of places $\cS$ as
\[Z_H(T)\coloneqq\prod_{P\in \cS} \left(1-{T^{\deg(P)}}\right)^{-1}.\]

We will now state our main result.

\begin{theorem}\label{thm:main}
The density of the set of coprime tuples of length $m\geq 2$ of the holomorphy ring $H$ is	 $\frac{1}{Z_H(q^{-m})}$.
\end{theorem}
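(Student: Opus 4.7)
The plan is to apply inclusion-exclusion over finite subsets of $\cS$, evaluate the resulting finite sums using Riemann-Roch, and control the tail via the absolute convergence of $Z_F$ at $T=q^{-m}$. For each place $P\in\cS$, set $A_P\coloneqq\{f\in H^m\mid f_i\in P_H \text{ for every } i\}$, so that $f\in U$ if and only if $f$ lies in no $A_P$. For a finite subset $\cS_0\subseteq \cS$ define the truncation $U_{\cS_0}\coloneqq H^m\setminus\bigcup_{P\in\cS_0}A_P$, so $U\subseteq U_{\cS_0}$.

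I would first compute $\vD(U_{\cS_0})$ exactly. Since $D$ is supported in $\cC\setminus\cS$, every $P\in\cS$ satisfies $v_P(D)=0$, which gives $A_P\cap\cL(D)^m=\cL(D-P)^m$ and, more generally, $\bigl(\bigcap_{P\in T}A_P\bigr)\cap\cL(D)^m=\cL(D-D_T)^m$ for every finite $T\subseteq \cS$, where $D_T\coloneqq\sum_{P\in T}P$. Standard inclusion-exclusion then yields
$$|U_{\cS_0}\cap\cL(D)^m|=\sum_{T\subseteq\cS_0}(-1)^{|T|}|\cL(D-D_T)|^m.$$
As soon as $\deg D>2g-2+\deg D_{\cS_0}$, Riemann-Roch applies to every summand, and dividing by $|\cL(D)|^m=q^{m(\deg D+1-g)}$ collapses the sum to $\prod_{P\in\cS_0}(1-q^{-m\deg P})$. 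Because $D'\geq D$ in $\cD$ implies $\deg D'\geq\deg D$, this establishes $\vD(U_{\cS_0})=\prod_{P\in\cS_0}(1-q^{-m\deg P})$. Using $U\subseteq U_{\cS_0}$ and letting $\cS_0$ exhaust $\cS$, we obtain the upper bound $\vDs(U)\leq\frac{1}{Z_H(q^{-m})}$.

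The matching lower bound is the main obstacle and requires a uniform tail estimate. Any nonzero $f\in(U_{\cS_0}\setminus U)\cap\cL(D)^m$ lies in $A_P\setminus\{0\}$ for some $P\in\cS\setminus\cS_0$, and since $0\notin U_{\cS_0}$ whenever $\cS_0\neq\emptyset$,
$$|(U_{\cS_0}\setminus U)\cap\cL(D)^m|\leq\sum_{P\in\cS\setminus\cS_0}\bigl(|\cL(D-P)|^m-1\bigr).$$
Combining the elementary bound $\dim\cL(D-P)\leq\deg(D-P)+1$ (valid when $\deg(D-P)\geq 0$; the remaining terms contribute zero thanks to the $-1$) with Riemann-Roch for $\cL(D)$ shows that each summand satisfies $|\cL(D-P)|^m/|\cL(D)|^m\leq q^{mg-m\deg P}$, and this bound is crucially uniform in $D$.

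The critical input is the convergence of $\sum_{P\in\cS}q^{-m\deg P}$ for $m\geq 2$, which follows from the absolute convergence of $Z_F$ at $T=q^{-m}<q^{-1}$ and is exactly where the hypothesis on $m$ is used. Given $\varepsilon>0$, choose $\cS_0$ large enough that $q^{mg}\sum_{P\in\cS\setminus\cS_0}q^{-m\deg P}<\varepsilon$; then for every $D$ beyond a suitable threshold,
$$\frac{|U\cap\cL(D)^m|}{|\cL(D)^m|}\geq\prod_{P\in\cS_0}(1-q^{-m\deg P})-\varepsilon.$$
Letting $\cS_0\uparrow\cS$ and $\varepsilon\to 0$ gives $\vDi(U)\geq\frac{1}{Z_H(q^{-m})}$, which together with the upper bound yields $\vD(U)=\frac{1}{Z_H(q^{-m})}$.
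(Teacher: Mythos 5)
Your proposal is correct and follows essentially the same strategy as the paper: truncate $U$ to a finite set of places $\cS_0$ (the paper's $U_t$), compute the density of the truncated set exactly for divisors of large degree via Riemann--Roch, and control $(U_{\cS_0}\setminus U)\cap\cL(D)^m$ uniformly in $D$ through the bounds $\ell(D-P)\le \deg(D-P)+1$, $\ell(D)\ge \deg D+1-g$ and the tail of $\sum_{P\in\cS}q^{-m\deg P}$, which converges since $q^{-m}<q^{-1}$. The only local difference is that you evaluate $|U_{\cS_0}\cap\cL(D)^m|$ by inclusion--exclusion over subsets of $\cS_0$, whereas the paper uses the Chinese remainder theorem and the surjectivity of the reduction map $\cL(D)\to\prod_{i} H/(Q_i)_H$ to count fibers; both steps rest on the same Riemann--Roch computation and yield $\prod_{P\in\cS_0}(1-q^{-m\deg P})$.
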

\begin{proof}
We first enumerate the set of places of $\cS=\{Q_1,Q_2,\dots,Q_t,\dots\}$. Let us define
\[U_t\coloneqq\{f=(f_1,\dots f_m)\in H^m \mid I_f\nsubseteq (Q_i)_H \quad \forall i\in \{1,\dots,t\} \}\]
and notice that $U_t\supseteq U$.
Observe that the condition $I_f\nsubseteq (Q_i)_H$ is equivalent to the fact that for each $i$ there exists at least one $f_j$ that does not belong to $Q_i$.
Consider now the projection $\pi\colon H \to H/((Q_1)_H \cdots (Q_t)_H)$
and observe that
\[	
H/((Q_1)_H \cdots (Q_t)_H) \cong \prod_{i=1}^{t} H/(Q_i)_H \cong \prod_{i=1}^{t} \vF_{q^{\deg Q_i}},
\]
by the Chinese remainder theorem over the ideals $(Q_i)_H$.
This gives us a homomorphism
\[\phi\colon H \longrightarrow \prod_{i=1}^{t}\vF_{q^{\deg Q_i}},\]
which we can extend to $m$-tuples by
\[\widehat\phi\colon H^m \longrightarrow \prod_{i=1}^{t}\vF^m_{q^{\deg Q_i}}.\]
By construction, this homomorphism satisfies
\begin{equation}\label{eq:Et}
U_t=\widehat\phi^{-1}\left(\prod_{i=1}^{t}(\vF^m_{q^{\deg Q_i}}\setminus \{0\})\right).
\end{equation}

Consider now a divisor $D \in \cD$.
We wish to count the number of elements in $U_t \cap \cL(D)^{m}$.
First, we will show that $\phi$ maps $\cL(D)$ surjectively onto
$\prod_{i=1}^{t} \vF_{q^{\deg Q_i}}$ if $\deg D$ is large enough.

For this, note that the image of $\cL(D)$ under $\pi$ is
$\cL(D)/(\cL(D) \cap ((Q_1)_H \cdots (Q_t)_H))$.
The space
\begin{equation*}
	\cL(D) \cap ((Q_1)_H \cdots (Q_t)_H) =
	\cL(D) \cap Q_1 \cap \cdots \cap Q_t
\end{equation*}
consists of all elements in
$\cL(D)$ with at least a root at each $Q_i$, so it is equal to
$\cL(D - \sum_{i=1}^{t} Q_i)$
(note that the $Q_i$ cannot be in the support of $D$).
Hence, its dimension as an $\vF_q$-vector space is
$\ell(D - \sum_{i=1}^{t} Q_i)$, which is equal to
$\deg D - \sum_{i=1}^{t} \deg Q_i + 1 - g$ if $\deg D$ is large enough by Riemann-Roch Theorem.
On the other hand, the dimension of $\cL(D)$ is then
$\ell(D) = \deg D + 1 - g$, and so the image has dimension
$\sum_{i=1}^{t} \deg Q_i$, the same as $\prod_{i=1}^{t} H/(Q_i)_H$.
Therefore $\pi$ and $\phi$ restricted to $\cL(D)$ are surjective.

We can now count the elements of $U_t \cap \cL(D)^m$ using \eqref{eq:Et}.
As we have just seen, the dimension of the kernel of $\phi$ restricted
to $\cL(D)$ is $\ell(D - \sum_{i=1}^{t} Q_i)$, so each element of $\prod_{i=1}^{t}(\vF_{q^{\deg Q_i}}^m \setminus \{0\})$ is the image under $\widehat\phi$ of exactly $q^{m \ell(D - \sum_{i=1}^{t}Q_i)}$ elements of $U_t \cap \cL(D)^m$.
Hence we get
\[
	\frac{\abs{U_t \cap \cL(D)^{m}}} {\abs{\cL(D)^{m}}}
	= q^{m(\ell(D - \sum_{i=1}^{t} Q_i) - \ell(D))} \cdot
	\prod_{i=1}^{t} (q^{m \deg Q_i} - 1)=\prod_{i=1}^{t} (1 - q^{-m \deg Q_i})
\]
if $\deg D$ is large enough.
It follows that the density of $U_t$ is well-defined and equals  \[\vD(U_t)=\lim_{D\in \cD}{\frac{\abs{U_t \cap \cL(D)^{m}}} {\abs{\cL(D)^{m}}}}= \prod_{i=1}^{t} (1 - q^{-m \deg Q_i}).\]
Since $U \subseteq U_t$, it follows that $\vDs(U) \le \vD(U_t)$.

To get an estimate in the other direction, let us write 
$(\cL(D)\cap U)\cup(\cL(D)\cap(U_t\setminus U))=\cL(D)\cap U_t$.
We have
\[\vDi(U)=\liminf_{D\in \cD}\frac{\abs{U\cap\cL(D)^m}}{q^{m\ell(D)}}\geq\lim_{D\in \cD}\frac{\abs{U_t\cap\cL(D)^m}}{q^{m\ell(D)}} -\limsup_{D\in \cD} \frac{\abs{(U_t\setminus U)\cap\cL(D)^m}}{q^{m\ell(D)}},\]
hence we have the inequalities
\begin{equation}\label{eq:fundeq}
\vD(U_t) \ge \vDs(U) \ge \vDi(U) \ge \vD(U_t)-\limsup_{D \in \cD} \frac{\abs{(U_t\setminus U)\cap\cL(D)^m}}{q^{m\ell(D)}}.
\end{equation}
Now, passing to the limit in $t$, we get that $\displaystyle{\lim_{t\rightarrow \infty}\vD(U_t)}=1/Z_H(q^{-m})$.
Therefore it remains to prove that
\[\lim_{t\rightarrow \infty}\limsup_{D \in \cD} \frac{\abs{(U_t\setminus U)\cap\cL(D)^m}}{q^{m\ell(D)}}=0.\]
In order to prove the last claim, let us denote by $\cQ_t$ the set $\{Q_1,\dots,Q_t\}$. Notice that if $A\in U_t\setminus U$, then there exists $P\in \cS\setminus \cQ_t$ for which $I_A\subseteq P_H$.
We get the following inclusion:
\begin{equation*}
		U_t \setminus U
		\subseteq
		\bigcup_{\mathclap{P \in \cS \setminus \cQ_t}} \{A \in H^{m} \mid
			I_A \subseteq P_H\}
		=
		\bigcup_{\mathclap{P \in \cS \setminus \cQ_t}} P_H^{m},
	\end{equation*}
where by $P_H^{m}$ we mean the Cartesian product of $m$ copies of the ideal $P_H$.
Fix now a divisor $D \in \cD$.
	It follows that
\begin{equation*}
		(U_t \setminus U) \cap \cL(D)^{m}
		\subseteq
		\bigcup_{\mathclap{P \in \cS \setminus \cQ_t}} (P_H \cap \cL(D))^{m}
		=
		\bigcup_{\mathclap{P \in \cS \setminus \cQ_t}} \cL(D - P)^{m}
		=
		\bigcup_{\mathclap{\substack{P \in \cS \setminus \cQ_t \\ \deg P \le \deg D}}}\cL(D - P)^{m}.
\end{equation*}
The last equality holds because $\cL(D - P) = 0$ if $\deg D - \deg P < 0$.
	With this containment, we can now estimate the last term of \eqref{eq:fundeq}:
	\begin{align*}
		\limsup_{D \in \cD} \frac{\abs{\cL(D)\cap(U_t\setminus U)}}{q^{m\ell(D)}}
		&\le
		\limsup_{D \in \cD} \abs[\Big]{
			\bigcup_{\mathclap{\substack{P \in \cS \setminus \cQ_t \\ \deg P \le \deg D}}}
			\cL(D - P)^{m}
		} \cdot q^{-m \ell(D)}
		\\&\le
		\limsup_{D \in \cD}
		\sum_{\mathclap{\substack{P \in \cS \setminus \cQ_t \\ \deg P \le \deg D}}}
		\abs[\Big]{
			\cL(D - P)^{m}
		} \cdot q^{-m \ell(D)}
		\\&=
		\limsup_{D \in \cD}
		\sum_{\mathclap{\substack{P \in \cS \setminus \cQ_t \\ \deg P \le \deg D}}}
		q^{m(\ell(D-P) - \ell(D))}		
\end{align*}
By the Riemann-Roch Theorem, we have that $\ell(D)\geq \deg(D)+1-g$ and $\ell(D-P)\leq \deg(D-P)+1$, since $\deg(D-P)\geq 0$~\cite[Eq.~1.21 and Theorem~1.4.17]{bib:stichtenoth2009algebraic}. It follows that the above is less or equal to
\[		
		\limsup_{D \in \cD}
		\sum_{\mathclap{\substack{P \in \cS \setminus \cQ_t \\ \deg P \le \deg D}}}
		q^{m(g - \deg P)}
		=
		\limsup_{d \to \infty} q^{gm}
		\sum_{\mathclap{\substack{P \in \cS \setminus \cQ_t \\ \deg P \le d}}}
		(q^{-m})^{\deg P}
		=
		q^{gm}
		\sum_{\mathclap{P \in \cS \setminus \cQ_t}} (q^{-m})^{\deg P}.
\]
Observe that $\sum_{P \in \cS \setminus \cQ_t}q^{-m\deg P}$
is the tail of a subseries  of the Zeta function of $F$ evaluated at $q^{-m}<q^{-1}$, which is absolutely convergent (see for example~\cite[Chapter 3]{bib:Mor}). As $t$ goes to infinity, it converges to $0$, from which our claim follows.
\end{proof}

\section{Consequences}
The reader should observe that in Theorem~\ref{thm:main} both $\cS$ and $\cC\setminus \cS$ could possibly be infinite and the result will still hold. Nevertheless, the density depends on the Zeta function of the holomorphy ring, which may be hard to compute.
First of all notice that this is not the case when $\cS$ is finite since under this condition $Z_H$ is a finite product.
The following immediate corollary covers the case in which $\cC\setminus \cS$ is finite.
\begin{corollary}\label{thm:comp}
Let $F$ be a function field, $\cS$ a set of places of $F$ and $H$ the holomorphy ring of $\cS$. Let $L_F(T)$ be the $L$-polynomial of $F$.
Then 
\[Z_H(q^{-m})=\frac{L_F(q^{-m})}{(1-q^{-m})(1-q^{-m+1})} \prod_{P\in \cC\setminus \cS} \left(1-\frac{1}{q^{\deg(P)m}}\right).\]
\end{corollary}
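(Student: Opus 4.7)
The plan is straightforward: factor the global Zeta function $Z_F$ as a product over $\cS$ and over $\cC\setminus\cS$, then solve for $Z_H$ and substitute the $L$-polynomial expression for $Z_F$.

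First I would recall the standard identity $Z_F(T)=\frac{L_F(T)}{(1-T)(1-qT)}$ valid for $|T|<q^{-1}$ (see e.g. Stichtenoth, Theorem 5.1.15). Since the Euler product
\[Z_F(T)=\prod_{P\in\cC}(1-T^{\deg P})^{-1}\]
converges absolutely on the same region, and since $\cS$ and $\cC\setminus\cS$ partition $\cC$, I can split the product as
\[Z_F(T)=Z_H(T)\cdot\prod_{P\in\cC\setminus\cS}(1-T^{\deg P})^{-1}.\]
Here the second factor is a \emph{finite} product by hypothesis, so no convergence issue arises when we invert it.

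Next I would rearrange to obtain
\[Z_H(T)=Z_F(T)\cdot\prod_{P\in\cC\setminus\cS}(1-T^{\deg P}),\]
and substitute the $L$-polynomial formula:
\[Z_H(T)=\frac{L_F(T)}{(1-T)(1-qT)}\prod_{P\in\cC\setminus\cS}(1-T^{\deg P}).\]
Finally, setting $T=q^{-m}$ (which lies in the region of convergence since $m\geq 2$, so $q^{-m}<q^{-1}$) and using $1-q\cdot q^{-m}=1-q^{-m+1}$ yields exactly the claimed identity.

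There is no real obstacle: the only subtlety is justifying that the infinite Euler product for $Z_F$ can be split along the partition $\cC=\cS\sqcup(\cC\setminus\cS)$, which is immediate from absolute convergence for $|T|<q^{-1}$, and that the finiteness of $\cC\setminus\cS$ lets us move its contribution to the numerator without convergence concerns.
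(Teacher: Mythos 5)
Your proposal is correct and follows essentially the same route as the paper, which simply invokes the definition of $Z_H$ as the Euler product over $\cS$ together with the identity $Z_F(T)=\frac{L_F(T)}{(1-T)(1-qT)}$; you merely spell out the splitting of the absolutely convergent product along $\cC=\cS\sqcup(\cC\setminus\cS)$ and the evaluation at $T=q^{-m}$. No gaps.
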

\begin{proof}
The corollary follows from Theorem~\ref{thm:main}, the definition of $Z_H$ and the expression of the Zeta function of $F$ in terms of the $L$-polynomial.
\end{proof}
\begin{remark}\label{rem:findata}
Observe now that in the case where $\cC\setminus \cS$ is finite, the density of coprime $m$-tuples of $H$  depends only on the following finite data: the degrees of the places in $\cC \setminus \cS$ and the $L$-polynomial of the function field, which again only depends on the $\vF_{q^i}$-rational points of the curve associated to the function field for $i\in\{1,\dots,g\}$ (see for example~\cite[Corollary~5.1.17]{bib:stichtenoth2009algebraic}).
\end{remark}

\subsection{An example}
Let $\Char(\vF_q)\neq 2,3$ for simplicity.
Let $a,b\in \vF_q$ and $p(x,y)=y^2-x^3-ax-b$ be a polynomial defining an elliptic curve $E$ over  $\vF_q$.
Let us define
\[A(E)\coloneqq\vF_q[x,y]/(p(x,y)).\]
Let $E(\vF_q)$ denote the set of (projective) $\vF_q$-rational points of $E$ (i.e.\@ the places of degree one of the function field of $E$).
\begin{corollary}\label{thm:E}
The density of $m$-tuples of coprime elements of $A(E)$ is 
\begin{equation}\label{eq:equationind}
\vD(U)=\frac{1-q^{-m+1}}{1+a_q q^{-m}+q^{-2m+1}}
\end{equation}
where $a_q=q+1-\abs{E(\vF_q)}$.
\end{corollary}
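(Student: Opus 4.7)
The strategy is to recognize $A(E)$ as a holomorphy ring and then invoke Corollary~\ref{thm:comp}. First I would argue that, for the Weierstrass model $y^2 = x^3+ax+b$ in characteristic different from $2$ and $3$, the function field $F$ of $E$ has a unique place $P_\infty$ lying above the place at infinity of the rational subfield $\vF_q(x)$, and this place is $\vF_q$-rational, so $\deg P_\infty = 1$. By standard integral-closure arguments (see e.g.~\cite[Proposition~3.2.9]{bib:stichtenoth2009algebraic}), the coordinate ring $A(E)$ coincides with $\bigcap_{P \in \cS} \cO_P$ for $\cS \coloneqq \cC \setminus \{P_\infty\}$, so $A(E)$ is the holomorphy ring of $\cS$ and $\cC \setminus \cS = \{P_\infty\}$ is finite.

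Next I would apply Corollary~\ref{thm:comp} with this $\cS$. Since the only place outside $\cS$ is $P_\infty$ with $\deg P_\infty = 1$, the product $\prod_{P \in \cC\setminus \cS}(1-q^{-m\deg P})$ reduces to the single factor $(1 - q^{-m})$, which cancels with the matching factor in the denominator, giving
\begin{equation*}
Z_H(q^{-m}) \;=\; \frac{L_F(q^{-m})}{1-q^{-m+1}}.
\end{equation*}

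Finally, I would compute $L_F(q^{-m})$ using the fact that $E$ has genus $g = 1$. By Hasse--Weil the $L$-polynomial has degree $2g = 2$ and takes the form $L_F(T) = 1 - a_q T + qT^2$ with $a_q = q+1-\abs{E(\vF_q)}$, in agreement with Remark~\ref{rem:findata} since only the single point count $\abs{E(\vF_q)}$ is needed in genus one. Substituting $T = q^{-m}$ and taking reciprocals as in Theorem~\ref{thm:main} via $\vD(U) = 1/Z_H(q^{-m})$ yields the closed form in~\eqref{eq:equationind}.

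I do not expect a serious obstacle. The only step that requires care is the identification of $A(E)$ with the holomorphy ring of $\cS$, which depends on verifying that the smooth projective model of $E$ has a single $\vF_q$-rational point at infinity and that $A(E)$ is already integrally closed in $F$. Both are routine under the running hypothesis on $\Char(\vF_q)$, and the remainder of the argument is a direct substitution into Corollary~\ref{thm:comp}.
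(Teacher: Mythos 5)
Your route is essentially the paper's: the paper also identifies $A(E)=\bigcap_{P\neq P_\infty}\cO_P$ and plugs the elliptic-curve zeta function into Theorem~\ref{thm:main}; going through Corollary~\ref{thm:comp} with $\cC\setminus\cS=\{P_\infty\}$, $\deg P_\infty=1$ is the same computation, and your extra care about $A(E)$ being integrally closed and $P_\infty$ being the unique rational place at infinity is a correct (and welcome) filling-in of a step the paper leaves implicit. The one point to fix is the final substitution: with your (standard) convention $L_F(T)=1-a_qT+qT^2$ and $a_q=q+1-\abs{E(\vF_q)}$, you get
\begin{equation*}
\vD(U)=\frac{1-q^{-m+1}}{1-a_q\,q^{-m}+q^{-2m+1}},
\end{equation*}
which does \emph{not} literally coincide with~\eqref{eq:equationind}; the statement (and the paper's proof, which writes the numerator of $Z_E$ as $1+a_qT+qT^2$, up to an evident typo $qT$ for $qT^2$) carries the opposite sign on the linear term, i.e.\ it is consistent only if $a_q$ is read as $\abs{E(\vF_q)}-q-1$ (Stichtenoth's coefficient $a_1$). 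So rather than asserting that your substitution ``yields the closed form in~\eqref{eq:equationind}'', you should either adopt that sign convention explicitly or note that with $a_q=q+1-\abs{E(\vF_q)}$ the correct denominator is $1-a_qq^{-m}+q^{-2m+1}$; everything else in your argument is sound.
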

\begin{proof}
Observe that the Zeta function of an elliptic curve is
\[Z_E(T)=\frac{1+a_q T+qT}{(1-q T)(1-T)}.\]
The result follows from Theorem~\ref{thm:main} applied to the holomorphy ring $A(E)=\bigcap_{P\neq P_\infty} \cO_P$ where $P_\infty$ is the place at infinity of $E$ with respect to $p(x,y)$.
\end{proof}
\begin{remark}
The reader should notice that \eqref{eq:equationind} depends only on the number of $\vF_q$-rational points of $E$, since the genus of $E$ equals one (see Remark~\ref{rem:findata}).
The probabilistic interpretation of Corollary~\ref{thm:E} is the following: select uniformly at random $m$ elements of $A(E)$ of degree at most $N$, then the probability that they generate the unit ideal in $A(E)$ approaches $\displaystyle{\frac{1-q^{-m+1}}{1+a_q q^{-m}+q^{-2m+1}}}$ as $N\rightarrow \infty$.
\end{remark}

\subsection{The case $F=\vF_q(x)$}
In the remaining part of this section we show how the results~\cite[Theorem~1]{bib:sugita2007probability} and~\cite[Remark~4]{bib:guo2013probability} about coprime $m$-tuples over $\vF_q[x]$ fit in our framework.

Denote by $P_\infty$ the place at infinity of the function field $\vF_q(x)$.
It is easy to see that the definition
of density for $\vF_q[x]$ given in~\cite{bib:guo2013probability,bib:sugita2007probability} agrees with ours for $H = \vF_q[x]=
\bigcap_{P\neq P_\infty} \cO_P$.
Hence, we get~\cite[Remark~4]{bib:guo2013probability} as a corollary to Theorem~\ref{thm:main}, while~\cite[Theorem~1]{bib:sugita2007probability} is simply the special case $m = 2$:
\begin{corollary}\label{thm:corrows}
Let $m>1$ be an integer.
The density of coprime $m$-tuples over $\vF_q[x]$ is 
\[\vD(U)=1-\frac{1}{q^{m-1}}.\]
\end{corollary}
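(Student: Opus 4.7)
The plan is to apply Theorem~\ref{thm:main} (or equivalently Corollary~\ref{thm:comp}) directly to the specific case $F = \vF_q(x)$, $\cS = \cC \setminus \{P_\infty\}$, so that $H = \vF_q[x]$. By the theorem, it suffices to compute $Z_H(q^{-m})$ and show that its reciprocal equals $1 - q^{-(m-1)}$.

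First I would recall two standard facts about the rational function field: the genus is $g = 0$, so its $L$-polynomial is $L_F(T) = 1$; and the place at infinity $P_\infty$ has $\deg P_\infty = 1$. The Zeta function of $F$ is therefore
\[
Z_F(T) = \frac{L_F(T)}{(1-T)(1-qT)} = \frac{1}{(1-T)(1-qT)}.
\]

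Next I would use the relation between $Z_F$ and $Z_H$ coming from the definitions: since $\cC \setminus \cS = \{P_\infty\}$, the Euler product for $Z_H$ omits exactly the factor $(1-T)^{-1}$, giving
\[
Z_H(T) = (1-T)\, Z_F(T) = \frac{1}{1-qT}.
\]
(One could equally invoke Corollary~\ref{thm:comp} with the single place $P_\infty$ of degree one, which collapses to the same expression.) Evaluating at $T = q^{-m}$ yields $Z_H(q^{-m}) = (1 - q^{-(m-1)})^{-1}$.

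Finally, Theorem~\ref{thm:main} gives
\[
\vD(U) = \frac{1}{Z_H(q^{-m})} = 1 - \frac{1}{q^{m-1}},
\]
which is the claimed formula. There is no real obstacle here; the entire content of the corollary is the specialization of the general machinery together with the explicit knowledge of $Z_F$ for $\vF_q(x)$. The only thing to double-check is that the definition of density used in~\cite{bib:sugita2007probability, bib:guo2013probability}, which filters $\vF_q[x]$ by degree of polynomials, matches our filtration by Riemann-Roch spaces $\cL(D)$; but this is immediate since $\cL(n P_\infty) = \{f \in \vF_q[x] : \deg f \le n\}$ and the divisors in $\cD$ are cofinal with the sequence $n P_\infty$.
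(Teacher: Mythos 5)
Your proof is correct and follows essentially the same route as the paper: both specialize Theorem~\ref{thm:main} to $H=\vF_q[x]$ by noting $Z_{\vF_q(x)}(T)=\frac{1}{(1-T)(1-qT)}$, dropping the Euler factor at $P_\infty$ to get $Z_H(T)=\frac{1}{1-qT}$, and evaluating at $q^{-m}$. Your closing remark comparing the degree filtration with the filtration by $\cL(nP_\infty)$ is a point the paper makes in the surrounding discussion rather than in the proof, and it is handled correctly.
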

\begin{proof}
It is enough to notice that the Zeta function of the function field $\vF_q(x)$ (i.e.\@ the Zeta function of the projective line) is 
\[Z_{\vF_q(x)}(T)=\frac{1}{(1-T)(1-qT)}\]
and then the Zeta function of the holomorphy ring
$\vF_q[x]=\bigcap_{P\neq P_\infty} \cO_P$
is \[Z_{\vF_q[x]}(T)=\frac{1}{1-qT}.\]
The claim follows by inverting the expression above and evaluating at $q^{-m}$. 
\end{proof}

\section*{Acknowledgements}
The authors want to thank Andrea Ferraguti for useful discussions and suggestions.

\bibliographystyle{plainnat}
\bibliography{biblio}{}

\end{document}